\documentclass{amsart}
\usepackage[utf8]{inputenc}
\usepackage[T1]{fontenc}

\usepackage{hyperref}
\usepackage{amsfonts}
\usepackage{eurosym}
\usepackage{amssymb}
\usepackage{dsfont}
\usepackage{color}
\usepackage{graphicx}
\usepackage{tikz}
\usetikzlibrary{cd}
\usepackage[colorinlistoftodos]{todonotes}
\usepackage[normalem]{ulem}

\newcommand{\N}{\mathbb N}

\newcommand{\prob}{\mathcal Prob}
\newtheorem{thm}{Theorem}

\newtheorem{lem}[thm]{Lemma}

\linespread{1.2}
\subjclass[2020]{Primary: 05C63; Secondary: 05C60} 
\keywords{$n$-saturated graph, topological graph, closed graph, inverse limit}

\title{On $n$-saturated closed graphs}
\date{}
\author[S. G\l \c{a}b]{Szymon G\l \c{a}b}
\address{Institute of Mathematics, \L \'od\'z University of Technology, ul.
W\'olcza\'nska 215, 93-005 \L \'od\'z, Poland}
\email{szymon.glab@p.lodz.pl}

\author[P. Gordinowicz]{Przemysław Gordinowicz}
\address{Institute of Mathematics, \L \'od\'z University of Technology, ul.
W\'olcza\'nska 215, 93-005 \L \'od\'z, Poland}
\email{pgordin@p.lodz.pl}

\begin{document}

\maketitle

\begin{abstract}
Geschke proved in \cite{Geschke} that there is clopen graph on $2^\omega$ which is 3-saturated, but the clopen graphs on $2^\omega$ do not even have infinite subgraphs that are 4-saturated; however there is $F_\sigma$ graph that is $\omega_1$-saturated. It turns out that there is no closed graph on $2^\omega$ which is $\omega$-saturated, see \cite{GGK}. In this note we complete this picture by proving that for every $n\in\N$ there is an $n$-saturated closed graph on the Cantor space $2^\omega$. The key lemma is based on probabilistic argument. The final construction is an inverse limit of finite graphs.  
\end{abstract}

\maketitle

A graph $G$ is $n$-saturated if for any set $A \subseteq V(G)$ of its vertices, $\vert A \vert = n-1$ and any subset $B \subseteq A$  there is a vertex $w\in V(G) \setminus A$ which is adjacent to all vertices from $A \setminus B$ and to no vertex of $B$. The random graph \cite{Rado} is known as a countable graph which is $\omega$-saturated, that is $n$-saturated for every $n$. Saturation was also studied for topological graphs. A graph $G$ on a topological space $X$ is clopen (open) if the edge relation of $G$ is a clopen (open) subset of $X^2$ without the diagonal. A graph $G$ on $X$ is called compact (closed, $F_\sigma$, etc.) if the edge relation of $G$ is a compact (closed, $F_\sigma$, etc.) subset of $X^2$. Geschke proved in \cite{Geschke} that there is clopen graph on $2^\omega$ which is 3-saturated, but the clopen graphs on $2^\omega$ do not even have infinite subgraphs that are 4-saturated; however there is $F_\sigma$ graph that is $\omega_1$-saturated. It turns out that there is no closed graph on $2^\omega$ which is $\omega$-saturated, see \cite{GGK}. So there is a natural question whether there exist $n$-saturated closed graphs on $2^\omega$, for finite $n>3$? We answer this question in positive. This makes our knowledge of topological graph saturation more complete. Our construction uses different means comparing to that in \cite{Geschke}. It utilizes a probabilistic argument in the key lemma and an inverse limit of finite graphs in the final construction.  

\section{Introduction}

By a graph we understand a pair $G=(V(G),E(G))$, where $V(G)$ is a non-empty set of vertices and $E(G)$ is a symmetric and reflexive relation on $V(G)$. The reflexivity of $E(G)$ means that each vertex of the graph $G$ has a loop. A homomorphism of graphs is a map $h \colon V(G)\to V(H)$ that preserves edges. A graph homomorphism $h \colon V(G)\to V(H)$ is called strict if for every edge $(p,q) \in E(H)$ such that $p,q \in h(V(G))$ there is an edge $(a,b) \in E(G)$ such that $h(a)=p$ and $h(b)=q$. A surjective strict homomorphism is called a \emph{quotient map}. Note that having loops in considered graphs is needed to being able to map an edge onto a single vertex without violating edge preserving property. That is basically the only reason to consider graphs with loops. 

Let $G$ be a graph, $A\subseteq V(G)$. A type over $A$ is a function $f\in\{0,1\}^A$ (that means $f\colon A\to\{0,1\}$). A vertex $v\in V(G)\setminus A$ realizes type $f$ over $A$ provided that for every $a\in A$, $a$ and $v$ are adjacent if and only if $f(a)=1$. We say that $G$ is an \emph{$n$-saturated graph} if for every $A\subseteq V(G)$, $\vert A\vert<n$ and any type $f\in\{0,1\}^A$ there is $x\in V$ that realizes type $f$; in other words $(a,x)\in E(G)\iff f(a)=1$ for every $a\in A$. Further, we say that $G$ is a \emph{weakly $n$-saturated graph} if for every $A\subseteq V(G)$, $\vert A\vert<n$, there is $x\in V(G)$ which is adjacent to every $a\in A$. 

Our final graph will be constructed as an inverse limit of finite graphs. Let us briefly recall the definition of the inverse limit in the graph context. 
Assume that $\{ G_n \colon n\in\N\}$ is a family of finite graphs and $p_{n} \colon V(G_{n+1})\to V(G_n)$ are quotient maps for $n\in\N$.  Let $p^n_{k}=p_{k}\circ p_{k+1}\circ\cdots\circ p_{n-1}$ for $n>k\geq 1$. Then $p^n_k\colon V(G_{n})\to V(G_k)$ is a quotient map as a superposition of quotient maps. Let $G:=\varprojlim G_n$ be the inverse limit of $\{(G_n)_{n\in\N},(p^n_{k})_{n>k}\}$, that is the graph with the set of vertices
$$
V(G)=\Big\{a\in \prod_{n\in\N}V(G_n) \colon a(k)=p^n_{k}(a(n))\text{ for }n>k\Big\}
$$
and the edge relation $E(G)$ given by
$$
(a,b)\in E(G)\iff\forall n\in\N\;\;(a(n),b(n))\in E(G_n).
$$
Clearly $E(G)$ symmetric and reflexive, and therefore $G$ is a graph. Assume that each finite graph $G_n$ has discrete topology. On the product $\prod V(G_n)$ we consider the product topology, that is the topology given by basic sets of the form
$$
B(x_1,\dots,x_n):=\Big\{a\in\prod_{n\in\N}V(G_n) \colon a(i)=x_i\text{ for }i\leq n\Big\}
$$
where $x_i\in V(G_i)$ for $i\leq n$. The space $\prod V(G_n)$ is metrizable, compact, zero-dimensional and perfect (i.e. it has no isolated points). Therefore by the Brouwer Theorem \cite[7.4]{K} it is homeomorphic to the Cantor space $\{0,1\}^\N$. We consider $G$ with the topology inherited from $\prod V(G_n)$. It turns out that $V(G)$ is a closed subset of $\prod V(G_n)$ and $E(G)$ is closed subset of $V(G)\times V(G)$. Moreover $q_n \colon V(G)\to V(G_n)$ given by $q_n(a)=a(n)$ is a quotient map for every $n\in\N$. Graphs of the form $\varprojlim G_n$, where $G_n$ are finite, are called profinite graphs. It turns out that profinite graphs coincide with compact graphs on zero-dimensional metrizable compact spaces, see \cite{GGK}. 

Closed subset of a compact metrizable zero-dimensional space is again compact metrizable and zero-dimensional. Note that $a$ is an isolated point in the compact metrizable zero-dimensional topological space $V(G)$ if there is $n$ such that $\{a\}\cap B(x_1,\dots,x_n)\cap V(G)=\{a\}$ where $x_i=a(i)$. We say that \textit{every vertex in $(G_n)$ eventually splits}, that is for any $n\in\N$ and $v\in V(G_n)$ there are $m>n$ and two distinct $x,y\in V(G_m)$ with $p^m_n(x)=p^m_n(y)=v$. This condition ensures us that if a basic set $B(x_1,\dots,x_n)\cap V(G)$ is non-empty, it is not a singleton. Finally, if every vertex in $(G_n)$ eventually splits, then $\varprojlim G_n$ is perfect, and consequently it is homeomorphic to the Cantor space.

\section{Construction of $n$-saturated closed graph on the Cantor space}

Consider the following random construction. For a fixed $n \in \N$ we start with a weakly $n$-saturated finite graph $H_0$ having vertex set $V(H_0)=\{1,2,\dots,k\}$ for some $k \geq n$. By $H_m$ we denote a random graph with vertex set $V(H_m) = \{1,2,\dots,k\}\times\{0,1,\dots,m\}$ and the edge relation defined as follows 
\begin{itemize}
\item[(i)] For every $i<j\leq k$ $(i,0)$ and $(j,0)$ are adjacent in $H_m$ $\iff i$ and $j$ are adjacent in $H_0$;
\item[(ii)] For any $i,j\leq k$ if $i$ and $j$ are not adjacent in $H_0$, then $(i,s)$ and $(j,t)$ are not adjacent as well for any $s,t\leq m$;
\item[(iii)] If $i$ and $j$ are adjacent in $H_0$ and $s,t\leq m$ with $s^2+t^2>0$ (at least one of $s$ and $t$ is greater than zero), then $(i,s)$ and $(j,t)$ are adjacent in $H_m$ with probability $1/2$, and the decision -- whether there {is such an edge or not} -- is made independently to the others.
\item[(iv)] $H_m$ is reflexive.
\end{itemize}

\begin{lem}\label{ProbGraph}
There is $m\in\N$ such that
\begin{itemize}
\item[(A)] $\prob(H_m$ is $n$-saturated$)>0$;
\item[(B)] $\prob(\mathcal{X})>0$ where $\mathcal{X}$ is an event that for every $p<n$, $i_1,\dots,i_p,i\in V(H_0)$ such that $i_1,\dots,i_p$ are adjacent to $i$ in $H_0$, and every $j_1,\dots,j_p\in\{0,1,\dots,m\}$ there is $l\in\{0,1,\dots,m\}$ such that $(i_1,j_1),\dots,(i_p,j_p)$ are adjacent to $(i,l)$ in $H_m$.   
\end{itemize}
\end{lem}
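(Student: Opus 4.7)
The plan is to prove both (A) and (B) by the probabilistic method, showing that the failure probability of each tends to $0$ as $m\to\infty$; for $m$ sufficiently large both probabilities will then be strictly positive (in fact close to $1$), which is stronger than what the lemma demands.

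For (A), I fix a subset $A=\{(i_1,j_1),\ldots,(i_p,j_p)\}\sbq V(H_m)$ with $p<n$ together with a type $f\in\{0,1\}^A$, and set $S:=\{i_\alpha: f((i_\alpha,j_\alpha))=1\}\sbq V(H_0)$, so that $|S|\leq p<n$. Weak $n$-saturation of $H_0$ supplies $i\in V(H_0)$ adjacent in $H_0$ to every element of $S$. I then search for a realizer of the form $(i,l)$ with $l\in\{1,\ldots,m\}$ avoiding the at most $p$ ``bad'' values $\{j_\alpha: i=i_\alpha\}$, which guarantees $(i,l)\notin A$. By (ii), the requirement that $(i,l)\not\sim(i_\alpha,j_\alpha)$ when $f((i_\alpha,j_\alpha))=0$ and $i\not\sim i_\alpha$ in $H_0$ is already satisfied deterministically. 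The remaining edge statuses---one for each index $\alpha$ with $i\sim i_\alpha$ in $H_0$---are independent Bernoulli$(1/2)$ random variables by (iii), because $l\geq 1$ ensures $l^2+j_\alpha^2>0$. Hence each particular $(i,l)$ realizes $f$ with probability at least $2^{-(n-1)}$, and for distinct values of $l$ the realization events are jointly independent (they depend on disjoint sets of random edges). Consequently the probability that no admissible $l$ yields a realizer is at most $\bigl(1-2^{-(n-1)}\bigr)^{m-n+1}$. Since there are only $O(m^{n-1})$ pairs $(A,f)$ with $|A|<n$, a union bound drives the total failure probability to $0$ as $m\to\infty$.

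The argument for (B) is essentially the same, and simpler, because the vertex $i$ with $i\sim i_\alpha$ in $H_0$ is now given by the hypothesis rather than produced via weak $n$-saturation. For each $l\in\{1,\ldots,m\}$ the probability that $(i,l)$ is adjacent to every $(i_\alpha,j_\alpha)$ is at least $2^{-p}\geq 2^{-(n-1)}$: each relevant edge is either an independent Bernoulli$(1/2)$ by (iii) or, in the degenerate case $(i,l)=(i_\alpha,j_\alpha)$, an automatic loop by (iv). Independence over $l$ gives a per-tuple failure probability of at most $\bigl(1-2^{-(n-1)}\bigr)^{m}$, and a union bound over the $O(m^{n-1})$ choices of $(i,i_1,\ldots,i_p,j_1,\ldots,j_p)$ completes the estimate.

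The main delicate point is justifying independence of the random edges and correctly handling the deterministic level-$0$ structure prescribed by~(i); both issues are resolved cleanly by restricting the candidate second coordinate to $l\geq 1$, so that every edge under consideration falls into the random regime of (iii). The only structural input beyond independence is the weak $n$-saturation of $H_0$, which is needed in (A) to ensure that the type $f$ is combinatorially feasible at all.
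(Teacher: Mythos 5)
Your proof is correct and takes essentially the same route as the paper's: a first-moment/union-bound argument showing that the failure probability is at most $O(m^{n-1})\bigl(1-2^{-(n-1)}\bigr)^{\Theta(m)}\to 0$, with weak $n$-saturation of $H_0$ supplying the candidate column $i$ and independence across distinct $l$ giving the geometric decay. You are in fact somewhat more careful than the paper's own write-up, which glosses over the deterministic level-$0$ edges from condition (i) and possible collisions of $(i,l)$ with $A$ --- points you resolve cleanly by restricting to $l\geq 1$ and discarding the at most $n-1$ bad values of $l$.
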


\begin{proof}
Let $A \subseteq V(H_m)$, $\vert A\vert=n-1$, say $A=\{(i_1,j_1),\dots,(i_{n-1},j_{n-1})\}$. By (i) the subgraph $\{(i,0) \colon i=1,2,\dots,k\}$ of $H_m$ is isomorphic to $H_0$, which in turn is weakly $n$-saturated. Therefore there is $i\in V(H_0)\setminus\{i_1,\dots,i_{n-1}\}$ adjacent to each $i_1,\dots,i_{n-1}$ in $H_0$. Fix a type $f\in\{0,1\}^A$. The probability that a given vertex $(i,l)$ in $H_m$ realizes the type $f$ equals $1/2^{n-1}$. Thus the probability that none vertex $(i,l)$ in $H_m$ realizes $f$ equals 
$$
\prob\Big(\bigcap_{l\leq m} \{(i,l)\text{ does not realize type }f\}\Big)=\Big(1-\frac{1}{2^{n-1}}\Big)^m. 
$$ 
Therefore we obtain
$$
\prob(H_m\text{ is not }n\text{-saturated})=
\prob\Big(\bigcup_{\vert A\vert=n-1}\bigcup_{f\in \{0,1\}^A} \{f\text{ is not realized in }H_m\}\Big)
$$
$$
\leq \sum_{\vert A\vert=n-1}\sum_{f\in \{0,1\}^A}\Big(1-\frac{1}{2^{n-1}}\Big)^m={(m+1)k \choose n-1}2^{n-1}\Big(1-\frac{1}{2^{n-1}}\Big)^m.
$$
Note that the latter number tends to zero when $m\to\infty$. Hence, for large enough $m$ there is $\prob(H_m\text{ is }n\text{-saturated})>0$. 

Note that the proof of part (B) of the lemma is analogous to the above construction. The difference is that a vertex $i \in V(H_0)$ is already chosen and the type function $f$ is constantly equal to 1. 
\end{proof}

\begin{lem}\label{QuotientMap} Let $H_m$ be a graph which satisfies the assertion of Lemma \ref{ProbGraph}.
Let $p \colon V(H_m) \to V(H_0)$ be given by $p(i,j)=i$. Then $p$ is a projection. Moreover, whenever, for $k<n$, vertices $v_1,\dots,v_k$ are adjacent to $v$ in $V(H_0)$, then for every $w_1,\dots,w_k \in V(H_m)$ with $p(w_i)=v_i$ there is an $w\in H_m$ which is adjacent to every $w_1,\dots,w_k$ and $p(w)=v$.
\end{lem}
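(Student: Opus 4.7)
My plan is to break the statement into the quotient-map claim and the ``moreover'' lifting claim, and dispatch each directly from the four defining properties of $H_m$ and the event $\mathcal{X}$ guaranteed by Lemma \ref{ProbGraph}.

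First, I would verify that $p\colon V(H_m)\to V(H_0)$, $p(i,j)=i$, is a quotient map. Surjectivity is immediate: every $i\in V(H_0)$ is the image of $(i,0)$. To see that $p$ preserves edges, I would use clause (ii) contrapositively: if $(i,s)$ and $(j,t)$ form an edge in $H_m$, then $i$ and $j$ must already be adjacent in $H_0$, so $(p(i,s),p(j,t))\in E(H_0)$; loops are covered by reflexivity (iv) combined with reflexivity of $H_0$. For strictness, note that $p(V(H_m))=V(H_0)$, so I need only produce, for each edge $(i,j)\in E(H_0)$, some edge of $H_m$ lying above it; clause (i) supplies $((i,0),(j,0))\in E(H_m)$ (and again (iv) handles the $i=j$ case since $(i,i)\in E(H_0)$).

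For the lifting statement, I would simply apply condition (B) of Lemma \ref{ProbGraph}. Given vertices $v_1,\dots,v_k$ adjacent to $v$ in $H_0$ with $k<n$ and preimages $w_i=(v_i,j_i)\in V(H_m)$, condition (B) — applied with the graph-theoretic parameters $p:=k$, $i_r:=v_r$, $i:=v$, and the indices $j_1,\dots,j_k$ already at hand — yields an $l\in\{0,1,\dots,m\}$ such that each $(v_r,j_r)$ is adjacent to $(v,l)$ in $H_m$. Setting $w:=(v,l)$ gives $p(w)=v$ and $w$ adjacent to every $w_r$, as required.

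I do not expect a genuine obstacle: both parts are direct consequences of the setup of $H_m$ and the event $\mathcal{X}$. The only mild care-point is bookkeeping between ``adjacent'' and the reflexive edges — one must remember that loops are present by (iv) so that, for instance, the case $v\in\{v_1,\dots,v_k\}$ in the lifting (which would force $w$ to coincide with some $w_r$) is not a real obstruction, and that strictness need not be verified on loops separately.
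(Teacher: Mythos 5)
Your proposal is correct and follows essentially the same route as the paper: edge-preservation via clause (ii) (the paper splits into the cases $s=t=0$ and $s^2+t^2>0$ where you invoke the contrapositive of (ii) uniformly, a harmless streamlining), strictness and surjectivity via clause (i), and the lifting claim as a direct application of condition (B) of Lemma \ref{ProbGraph}. Your extra remarks on loops and on the case $v\in\{v_1,\dots,v_k\}$ are accurate and only make the argument more careful than the paper's.
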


\begin{proof}
Assume that $((i,s),(j,t))\in E(H_m)$. If $i=j$, then $(i,j)\in E(H_0)$, since $E(H_0)$ is reflexive. If $i\neq j$ and $s=t=0$, then by (i) we obtain $(i,j)\in E(H_0)$. If $i\neq j$ and $s^2+t^2>0$, then $(i,j)\in E(H_0)$ by (iii) and (ii). Therefore $p$ is a homomorphism. By (i) it is also strict and surjective, and therefore it is a quotient map. Moreover part of assertion follows from condition (B) of Lemma \ref{ProbGraph}.
\end{proof}
  
\begin{thm}
Let $n\in\N$. There exists $n$-saturated closed graphs on $2^\omega$. 
\end{thm}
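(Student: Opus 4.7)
The plan is to construct the desired graph as the inverse limit $G := \varprojlim G_\ell$ of a carefully chosen tower of finite graphs. Start with $G_0$ any finite weakly $n$-saturated graph, for instance the complete graph (with loops) on $n$ vertices. Given $G_\ell$, apply Lemma \ref{ProbGraph} with $H_0 := G_\ell$: for all sufficiently large $m$ the intersection of the events in (A) and (B) still has positive probability (by the union bound applied to two vanishing complementary probabilities), so one can fix an outcome of $H_m$ realizing both properties and declare it to be $G_{\ell+1}$. By Lemma \ref{QuotientMap}, the projection $p_\ell \colon V(G_{\ell+1}) \to V(G_\ell)$, $p_\ell(i,j) := i$, is a quotient map and lifts any family of common neighbors of size less than $n$ to arbitrary prescribed preimages. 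Choosing $m \geq 1$ at every step, every vertex of $G_\ell$ has at least two preimages in $G_{\ell+1}$, so by the discussion in the introduction $V(G)$ is homeomorphic to $2^\omega$ and $E(G)$ is closed in $V(G)^2$.

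To verify $n$-saturation, fix $A \subseteq V(G)$ with $|A| < n$ and a type $f \in \{0,1\}^A$. Since $V(G)$ is Hausdorff, there is $\ell_0$ such that for every $\ell \geq \ell_0$ the points $q_\ell(a)$, $a \in A$, are pairwise distinct; write $A^{(\ell)} := \{q_\ell(a) : a \in A\}$ and define the induced type $f^{(\ell)}$ by $f^{(\ell)}(q_\ell(a)) := f(a)$. Using $n$-saturation of $G_{\ell_0}$, choose $x_{\ell_0} \in V(G_{\ell_0}) \setminus A^{(\ell_0)}$ realizing $f^{(\ell_0)}$. For $\ell < \ell_0$ set $x_\ell := p_\ell^{\ell_0}(x_{\ell_0})$; for $\ell \geq \ell_0$ extend inductively by invoking the lifting clause of Lemma \ref{QuotientMap}: since $\{a^{(\ell)} : a \in A,\ f(a) = 1\}$ is a family of fewer than $n$ neighbors of $x_\ell$ in $G_\ell$, there exists $x_{\ell+1}$ with $p_\ell(x_{\ell+1}) = x_\ell$ adjacent in $G_{\ell+1}$ to every $a^{(\ell+1)}$ with $f(a) = 1$. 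Non-adjacency to the remaining $a^{(\ell+1)}$'s is automatic from condition (ii) of the construction, which forbids edges between copies of pairs of vertices that were not adjacent in $G_\ell$.

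The resulting sequence $x := (x_\ell)$ lies in $V(G)$ by coherence, and realizes $f$: indeed $x \neq a$ for every $a \in A$ because already $x_{\ell_0} \neq a^{(\ell_0)}$; for $a$ with $f(a) = 1$ one has $(x_\ell, a^{(\ell)}) \in E(G_\ell)$ at every level -- for $\ell \geq \ell_0$ by the inductive lifting, and for $\ell < \ell_0$ by pushing the edge at level $\ell_0$ down through the graph homomorphism $p_\ell^{\ell_0}$ -- so $(x, a) \in E(G)$; and for $a$ with $f(a) = 0$ the single non-adjacency at level $\ell_0$ already prevents $(x, a)$ from belonging to $E(G)$, since edges in the inverse limit must be witnessed at every level.

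The crux of the argument is the inductive lifting in the middle paragraph: the prescribed adjacencies and non-adjacencies must be simultaneously arranged at each new level. This delicate balance is precisely what conditions (ii) and (B) of the random construction were designed to provide -- (ii) stabilizes non-adjacencies under lifting, while (B), through Lemma \ref{QuotientMap}, allows any required edge family to be realized at any prescribed choice of copies of its endpoints. With these packaged into Lemmas \ref{ProbGraph} and \ref{QuotientMap}, the inverse-limit passage becomes routine.
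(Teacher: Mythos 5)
Your proposal is correct and follows essentially the same route as the paper: build the tower by repeatedly applying Lemma \ref{ProbGraph} and Lemma \ref{QuotientMap}, take the inverse limit, realize the type at a finite level where the projections of $A$ separate, and lift the positive adjacencies upward while noting that a single non-adjacency at that level already kills the edge in the limit. Your added remark that events (A) and (B) must be realized \emph{simultaneously} (via a union bound on the two vanishing complementary probabilities) is a small point the paper leaves implicit, but otherwise the two arguments coincide.
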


\begin{proof}
We will construct a profinite graph which is $n$-saturated as an inverse limit of finite graphs. Let $G_0$ be a complete graphs with $n$ vertices. Clearly $G_0$ is weakly $n$-saturated. Using Lemma \ref{ProbGraph} and Lemma \ref{QuotientMap} there is a finite $n$-saturated graph $G_1$ and a quotient map $p_{0} \colon V(G_1)\to V(G_0)$. Proceeding inductively we find sequences  $(G_n)_{n\in\N}$ and $(p_{n})_{n\in\N}$ of finite $n$-saturated graphs and of quotient mappings, respectively, such that  $p_{n}:V(G_{n+1})\to V(G_{n})$. Let $G$ be its inverse limit. In the construction of $H_m$, $p(i,0)=p(i,1)=i$ for any $i\in V$. This means that every vertex in $(G_n)$ eventually splits, and therefore $G$ is, as a topological space, homeomorphic to the Cantor space $2^\omega$. We will show that $G$ is $n$-saturated.

Let $A\subseteq V(G)$, $\vert A\vert=n-1$ and $f\in\{0,1\}^A$. Then $A=\{x_1,\dots,x_{n-1}\}$, $x_i(m)\in V(G_{m})$, $p_{m}(x_i(m+1))=x_i(m)$, for every $m\in\N$. Setting $k = \sum_{i} f(x_i)$ we may assume that $f(x_i)=1$ if and only if $i\leq k$.  There is $m$ such that the set $p_m(A)=\{x_1(m),\dots,x_{n-1}(m)\}\subseteq V(G_m)$ has $n-1$ elements. There is $v_m\in G_m$ which realizes $f$, that means $v_m$ is adjacent to vertices $x_1(m),\dots, x_k(m)$ and not adjacent to $x_{k+1}(m),\dots,x_{n-1}(m)$. By Lemma \ref{QuotientMap} there is $v_{m+1}\in G_{m+1}$ which is adjacent to $x_1(m+1),\dots, x_k(m+1)$ and $p_{m}(v_{m+1})=v_{m}$. Proceeding inductively {for any $i>m$} we find $v_i$ such that $v_{i}\in V(G_{i})$ which is adjacent to $x_1(i),\dots, x_k(i)$ and $p_{i-1}(v_{i})=v_{i-1}$.  

Define $v\in V(G)$ as follows 
$$
v(i) = \left\{
\begin{array}{ccl}
v_i & \text{~for~} & i\geq m,\\
p^m_{i}(v_m) & \text{~for~} & i < m.
\end{array}
\right.
$$
Then $v(i)$ is adjacent to $x_1(i),\dots,x_k(i)$ for every $i$ (for $i<m$ it follows from the fact that projection $p^m_i$ preserves edges). Therefore $v$ is adjacent to $x_1,\dots,x_k$. Note that $v$ is not adjacent to $x_{k+1},\dots,x_n$, since $v(m)$ is not adjacent $x_{k+1}(m),\dots,x_{n-1}(m)$.
\end{proof}

\end{document}